\theoremstyle{plain}
\newtheorem{thm}{Theorem}[section]
\newtheorem{theorem}[thm]{Theorem}
\theoremstyle{definition}
\newtheorem{definition}[thm]{Definition}
\newtheorem{remark}[thm]{Remark}
\newtheorem{problem}[thm]{Problem}
\newtheorem{thevarthm}[thm]{\varthmname}
\newenvironment{varthm*}[1]{\trivlist\item[]{\bf #1.}\it}{\endtrivlist}
\renewcommand\geq{\geqslant}
\renewcommand\leq{\leqslant}
\newcommand\be{\begin{eqnarray*}}
\newcommand\ee{\end{eqnarray*}}
\newcommand\newop[2]{\def#1{\mathop{\rm #2}\nolimits}}
\newop\log{log}
\newop\ord{ord}
\newop\Gal{Gal}
\newop\SL{SL}
\newop\Bl{Bl}
\newop\mult{mult}
\newop\mass{mass}
\newop\div{div}
\newop\codim{codim}
\newop\sing{sing}
\newop\vdim{vdim}
\newop\edim{edim}
\newop\Ass{Ass}
\newop\size{size}
\newop\reg{reg}
\newop\satdeg{satdeg}
\newop\supp{supp}
\newop\Neg{Neg}
\newop\Nef{Nef}
\newop\Nefh{Nef_H}
\newop\Eff{Eff}
\newop\Zar{Zar}
\newop\MB{MB}
\newop\MBxC{MB\mathit{(x,C)}}
\newop\NnB{NnB}
\newop\Bigg{Big}
\newop\Effbar{\overline{\Eff}}
\theoremstyle{plain}
\newtheorem{custom}{{\rm Theorem}}
\def\keywordname{{\bfseries Keywords}}%
\def\keywords#1{\par\addvspace\medskipamount{\rightskip=0pt plus1cm
\def\and{\ifhmode\unskip\nobreak\fi\ $\cdot$
}\noindent\keywordname\enspace\ignorespaces#1\par}}
\def\subclassname{{\bfseries Mathematics Subject Classification
(2020)}\enspace}
\def\subclass#1{\par\addvspace\medskipamount{\rightskip=0pt plus1cm
\def\and{\ifhmode\unskip\nobreak\fi\ $\cdot$
}\noindent\subclassname\ignorespaces#1\par}}
\begin{document}
\title{On cubic-line arrangements with simple singularities}
\author{Przemys\l aw Talar}
\date{\today}
\maketitle

\thispagestyle{empty}
\begin{abstract}
In the present note we study combinatorial and algebraic properties of cubic-line arrangements in the complex projective plane admitting nodes, ordinary triple and $A_{5}$ singular points. We deliver a Hirzebruch-type inequality for such arrangement and we study the freeness of such arrangements providing an almost complete classification.
\keywords{cubic-line arrangements, freeness, Hirzebruch-type inequalities}
\subclass{14N20, 14C20}
\end{abstract}
\section{Introduction}
The main aim of the present note is to start a systematic study on arrangements of plane curves consisting of smooth cubic curves and lines admitting some simple singularities. Our motivation comes from the recent papers devoted to arrangements consisting of conics and lines in the plane, or generally speaking, arrangements of rational curves in the plane, see for instance \cite{DimcaPokora, PokSz}. From that perspective, it is worth recalling a recent paper by Dimca and Pokora \cite{DimcaPokora} devoted to conic-line arrangements admitting nodes, tacnodes, and ordinary triple points as singularities. In the aforementioned paper, the authors deliver a Hirzebruch-type inequality for weak combinatorics of such arrangements and they provide a complete characterization of free conic-line arrangements with nodes, tacnodes, and ordinary triple points. Here our aim is to extend current research by looking at arrangements admitting positive genera curves. Our first choice is to study arrangements of smooth cubic curves and lines in the complex projective plane such that these arrangements admit nodes, $A_{5}$ singular points, and ordinary triple points. This selection is based on a very recent paper by Dimca, Ilardi, Pokora and Sticlaru \cite{IDPS} where, among many things, the authors want to understand configurations of flex points and the associated arrangements of curves. In particular, we want to explain certain aspects revolving around configurations of flex points associated to smooth cubic curves and the arrangement constructed as unions of smooth cubic curves and lines that are tangent to flex points. Let us recall briefly the main results of the present note.

First of all, let $\mathcal{EL} = \{\mathcal{E}_{1}, ..., \mathcal{E}_{k},\ell_{1}, ..., \ell_{d}\}\subset \mathbb{P}^{2}_{\mathbb{C}}$ be an arrangement consisting of $k\geq 1$ smooth cubic curves and $d\geq 1$ lines admitting $n_{2}$ nodes (i.e., $A_{1}$ singularities), $t_{5}$ singular points of type $A_{5}$, and $n_{3}$ ordinary triple points (i.e., $D_{4}$ singularities). Then for such arrangements we have following combinatorial count:
\begin{equation}
\label{count}
9\binom{k}{2} + 3kd + \binom{d}{2} = n_{2} + 3t_{5} + 3n_{3}.
\end{equation}
\begin{proof}
This naive count follows from the classical B\'ezout theorem. The left-hand side is just the number of intersections among all the curves in a given arrangement. The right hand side follows from the fact that a double intersection point costs one intersection, a triple intersection point costs three intersections, and a singular point of type $A_{5}$ costs three intersections, and this follows from the topological classification of plane curve singularities.

\end{proof}
 Obviously the above naive count is very coarse. In the course of the paper, by the weak combinatorics of a given arrangement of smooth cubic curves and lines $\mathcal{EL}$ we mean the vector $(d,k;n_{2},n_{3},t_{5})\in \mathbb{Z}_{\geq 0}^{5}$. In the theory of line arrangements we have certain inequalities involving weak combinatorics, for instance the celebrated Hirzebruch's inequality for line arrangements in $\mathbb{P}^{2}_{\mathbb{C}}$, see \cite{BHH87}. In the setting of our paper, we show the following result.
 \begin{custom}[see Theorem \ref{weakH}]
Let $\mathcal{EL} = \{\mathcal{E}_{1}, ..., \mathcal{E}_{k},\ell_{1}, ..., \ell_{d}\}\subset \mathbb{P}^{2}_{\mathbb{C}}$ be an arrangement consisting of $k\geq 1$ smooth cubic curves and $d\geq 1$ lines such that $3k+d\geq 6$, admitting only $n_{2}$ nodes, $t_{5}$ points of multiplicity $A_{5}$, and $n_{3}$ ordinary triple points. Then we have
$$27k+n_{2}+\frac{3}{4}n_{3}\geq d + 5t_{5}.$$
 \end{custom}
 Our proof uses an orbifold version of the Bogomolov-Miyaoka inequality for log pairs. \\\\
 Next, we focus on the freeness of our cubic-line arrangements. Let us recall basic definitions. Denote by $S := \mathbb{C}[x,y,z]$ the coordinate ring of $\mathbb{P}^{2}_{\mathbb{C}}$ and for a homogeneous polynomial $f \in S$ we denote  by $J_{f}$ the Jacobian ideal associated with $f$, i.e., the ideal generated by the partial derivatives of $f$. 
\begin{definition}
We say that $C \subset \mathbb{P}^{2}_{\mathbb{C}}$ of degree $d$ given by $f \in S_{d}$ is a free curve if the $S$-module $\textrm{Syz}(J_f)$ is minimally generated by $2$ homogeneous syzygies $\{r_1,r_2\}$ of degrees $d_i=\deg r_i$, ordered such that $$1\leq d_1\leq d_2 \quad \text{ and } \quad d_{1}+d_{2}=d-1.$$ 

The multiset $(d_1,d_2)$ is called the exponents of $C$ and $\{r_1,r_2\}$ is said to be a minimal set of generators for the $S$-module $\textrm{Syz}(J_f).$ 
\end{definition}
In the setting of the above definition, the minimal degree of the Jacobian relations among the partial derivatives of $f$ is defined as
$${\rm mdr}(f) := d_{1}.$$

It is somehow complicated to check the freeness of a given curve using the above definition. However, by a result of du Plessis and Wall \cite{duP}, we have the following effective criterion.
\begin{theorem}[du Plessis-Wall]
\label{DP}
A reduced curve $C \subset \mathbb{P}^{2}_{\mathbb{C}}$ given by $f \in S_{d}$ with ${\rm mdr}(f)\leq (d-1)/2$ is free if and only if
\begin{equation}
\label{duPles}
(d-1)^{2} - d_{1}(d-d_{1}-1) = \tau(C),
\end{equation}
where $\tau(C)$ denotes the total Tjurina number of $C$.
\end{theorem}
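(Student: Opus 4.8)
The plan is to translate freeness into the vanishing of the Jacobian module and then to compare Hilbert functions. Write $AR(f)={\rm Syz}(J_f)\subset S^3$ for the graded module of Jacobian relations, let $\hat{J_f}$ be the saturation of $J_f$ with respect to the irrelevant ideal $\mathfrak{m}=(x,y,z)$, and set $N(f)=\hat{J_f}/J_f$, the Jacobian module. Since $C$ is reduced, $V(J_f)$ is the zero-dimensional singular scheme of $C$, so $N(f)$ has finite length and $\dim_{\mathbb{C}}(S/\hat{J_f})_k=\tau(C)$ for $k\gg 0$. The two facts I would take as the backbone are: (i) $C$ is free if and only if $N(f)=0$, equivalently $AR(f)$ is a free $S$-module of rank $2$; and (ii) $N(f)$ is self-dual in the sense that $N(f)_k\cong N(f)_{T-k}^{\vee}$ with $T=3(d-2)$. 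Writing $r={\rm mdr}(f)=d_1$, the statement then reduces to comparing $\tau(C)$ with the value it takes in the free case.

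First I would dispose of the direction ``free $\Rightarrow$ equality'', which is a pure Hilbert-series computation. If $C$ is free with exponents $(d_1,d_2)$, $d_1+d_2=d-1$, the Saito criterion gives the minimal free resolution
\begin{equation*}
0\to S(-d_1-(d-1))\oplus S(-d_2-(d-1))\to S(-(d-1))^3\to S\to M(f)\to 0,
\end{equation*}
so that $H(M(f),t)=P(t)/(1-t)^3$ with $P(t)=1-3t^{d-1}+t^{d_1+d-1}+t^{d_2+d-1}$. Using $d_1+d_2=d-1$ one checks $P(1)=P'(1)=0$, whence the stable value $\tau(C)=\lim_k\dim M(f)_k=\tfrac{1}{2}P''(1)=(d-1)^2-d_1d_2=(d-1)^2-d_1(d-d_1-1)$. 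This proves \eqref{duPles} whenever $C$ is free, and incidentally shows the hypothesis $d_1\le(d-1)/2$ is automatic there, since $d_1\le d_2$.

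The substance is the converse, for which I would prove the sharp upper bound $\tau(C)\le(d-1)^2-r(d-r-1)$ for every reduced $C$ with $r\le(d-1)/2$, together with the fact that equality forces $N(f)=0$. Pick a relation $\rho_1\in AR(f)_r$ of minimal degree; its three coefficients must be coprime, since a common factor would lower the degree of the relation, and the bound $r\le(d-1)/2$ guarantees that the inclusion $S(-r)\hookrightarrow AR(f)$ it generates splits off a free rank-one summand whose quotient is a rank-one torsion-free module, i.e.\ a twist of the ideal sheaf of a zero-dimensional Bourbaki scheme $Z$. Expressing $\tau(C)$ through $r$, $d$ and $\deg Z$ via the resulting exact sequences yields $\tau(C)=(d-1)^2-r(d-r-1)-\deg Z$, so the bound holds with equality exactly when $Z=\varnothing$, that is, when $AR(f)=S(-r)\oplus S(-(d-1-r))$ is free. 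The main obstacle I anticipate is precisely the bookkeeping of this last step: pinning down the identification of the quotient with a Bourbaki ideal, verifying that $\deg Z$ is the correct non-negative defect term, and checking that $r\le(d-1)/2$ is exactly what forbids a competing low-degree second generator, which is what separates this extremal regime from the range $r>(d-1)/2$, where the maximal Tjurina value is instead realized by non-free curves. The self-duality (ii) is the tool that controls $N(f)$ in this comparison and pins equality to $N(f)=0$, with the boundary case $r=(d-1)/2$ (where $\dim AR(f)_r$ may exceed $1$) requiring a small separate analysis.
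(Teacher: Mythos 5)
The paper offers no proof of Theorem~\ref{DP} to compare against: it is quoted as a known result with a citation to du Plessis and Wall \cite{duP}, whose original argument is discriminant-theoretic. Your sketch is instead the modern syzygy-theoretic proof in the style of Dimca and Dimca--Sticlaru, and its skeleton is correct. The direction ``free $\Rightarrow$ equality'' is complete as you wrote it: with $d_1+d_2=d-1$ one indeed gets $P(1)=P'(1)=0$ and $\tau(C)=\tfrac12 P''(1)=(d-1)^2-d_1d_2$ (e.g.\ the triangle $xyz$, exponents $(1,1)$, gives $\tau=3$ as a check). For the converse, two corrections to your phrasing. First, ``splits off a free rank-one summand'' is wrong as stated: the Bourbaki sequence $0\to S(-r)\to \textrm{Syz}(J_f)\to I_Z(r-d+1)\to 0$ does not split in general; what is true --- and what you also say --- is that the quotient is torsion-free, and this follows from the coprimality of the three coefficients of $\rho_1$ (itself forced by minimality of $r$), not from the hypothesis $r\le(d-1)/2$; that hypothesis enters only through the consistency of ``free $\Rightarrow d_1=r\le d_2$'' and your remark about the regime $2r\ge d$ is accurate. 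Second, in the equality case the Chern-class bookkeeping at sheaf level gives $\tau(C)=(d-1)^2-r(d-1-r)-\deg Z$ with $\deg Z\ge 0$, and $\deg Z=0$ yields an extension of $\mathcal{O}(r-d+1)$ by $\mathcal{O}(-r)$, which splits automatically since $H^1(\mathbb{P}^2,\mathcal{O}(d-1-2r))=0$; to pass from the split sheaf back to freeness of the module you need that $\textrm{Syz}(J_f)$ equals the graded sections of its sheafification, which holds because it is a second syzygy and hence has depth $\ge 2$. With these details filled in, the self-duality of $N(f)$ with $T=3(d-2)$, which you list as a backbone fact, is not actually needed on this route; it belongs to the alternative proof via the Jacobian module. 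So: correct plan, fixable local misstatements, and a genuinely different (and more modern) route than the source the paper cites.
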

 First of all, in the light of the above discussion, we can show the following result.
  \begin{custom}[see Theorem \ref{bound}]
Let $\mathcal{EL} = \{\mathcal{E}_{1}, ..., \mathcal{E}_{k},\ell_{1}, ..., \ell_{d}\}\subset \mathbb{P}^{2}_{\mathbb{C}}$ be an arrangement consisting of $k\geq 1$ smooth cubic curves and $d\geq 1$ lines admitting only $n_{2}$ nodes, $t_{5}$ singularities of type $A_{5}$, and $n_{3}$ ordinary triple points. If $\mathcal{EL}$ is free, then $3k+d \leq 9$.
 \end{custom}
Then we focus our efforts on the classification problem.
\begin{problem}
For a fixed $3k+d \in \{4,5,6,7,8,9\}$ does there exist an arrangement $\mathcal{EL}$ consisting of $k\geq 1$ smooth cubic curves and $d\geq 1$ with nodes, singular points of type $A_{5}$, and ordinary triple points that is free? 
\end{problem} 
In the context of the above question, it turns out that if $3k+d \in \{4,5,8\}$, then there is no single example of a free arrangement. Then, for $3k+d \in \{6,7\}$, we can construct examples of free arrangements using the Fermat cubic and its inflection lines. In the case when $3k+d=9$ we are able to extract four admissible weak combinatorics, but we do not know whether one can realize them geometrically as cubic-line arrangements. To sum up this brief discussion, we have prove the following.
\begin{custom}[see Theorem \ref{class}]
Let $\mathcal{EL} = \{\mathcal{E}_{1}, ..., \mathcal{E}_{k},\ell_{1}, ..., \ell_{d}\}\subset \mathbb{P}^{2}_{\mathbb{C}}$ be an arrangement consisting of $k\geq 1$ smooth cubic curves and $d\geq 1$ lines admitting only nodes, $A_{5}$ singular points, and ordinary triple points. If $\mathcal{EL}$ is free, then $3k+d \in \{6,7,9\}$, possibly except the case $3k+d=9$.
\end{custom}

\section{On the weak combinatorics of cubic-line arrangements}
Our approach towards showing a Hirzebruch-type inequality for cubic-line arrangements with nodes, singularities of type $A_5$, and ordinary triple points is based on Langer's variation on the Miyaoka-Yau inequality \cite{Langer} which uses local orbifold Euler numbers $e_{orb}$ of singular points. 

First of all, using Arnold's classification of singularities \cite{arnold}, we present the local normal forms of the aforementioned singularites, namely
\begin{center}
\begin{tabular}{ll}
$A_{1}$ & $: \, x^{2}+y^{2}  = 0$, \\
$A_{5}$ & $: \, x^{2}+y^{6}  = 0$, \\
$D_{4}$ & $: \, y^{2}x + x^{3}  = 0$.
\end{tabular}
\end{center}
Moreover, we will use the standard convention in the theory of curve arrangements that $A_{1}$ singularities are called nodes and $D_{4}$ singularities are called triple intersection points, and we will use this convention freely throughout the paper.

We work with \textbf{log pairs} $(X,D)$, where $X$ is a complex smooth projective surface and $D$ is a boundary divisor -- it is an effective $\mathbb{Q}$-divisor whose coefficients are $\leq 1$ and such that $K_{X}+D$ is $\mathbb{Q}$-Cartier. We say that a log pair $(X,D)$ is effective if $K_{X}+D$ is effective.
\begin{definition}
Let $(X,D)$ be a log pair and let $f: Y \rightarrow X$ be a proper birational morphism from a normal surface $Y$. Write 
$$K_{Y}+D_{Y} = f^{*}(K_{X}+D)$$
with $f_{*}D_{Y} = D$. If the coefficients of $D_{Y}$ are less than or equal to one for every $f: Y \rightarrow X$, then $(X,D)$ is called a log canonical surface.
\end{definition}

Now we need to recall some information about local orbifold numbers $e_{orb}$ that appear in the context of our arrangements. We must point out that the definition of local orbifold Euler numbers is technical and requires a lot of preparation. Due to this reason, we aim to provide only some of their useful properties and for all necessary details we have to refer directly to \cite[pages 361 - 369]{Langer}. First of all, let us provide two comments that will shed some lights on these numbers, namely:
\begin{itemize}
\item local orbifold Euler numbers are analytic in their nature,
\item if $(\mathbb{C}^{2},D)$ is log canonical at $0$ and ${\rm mult}_{0}(D)$ denotes the multiplicity of $D$ in $0$ (i.e., this is a sum of multiplicities of irreducible components $D_{i}$ counted with appropriate multiplicities), then
$$e_{orb}(0, \mathbb{C}^{2},D) \leq (1-{\rm mult}_{0}(D)/2)^{2},$$
which means that $e_{orb}(x,X,D)\leq 1$ for any log canonical pair $(X,D)$.
\end{itemize}
In our setting we look at log pairs $(\mathbb{P}^{2}_{\mathbb{C}}, \alpha D)$, where $D$ is a boundary divisor consisting of $k$ smooth cubic curves and $d$ lines admitting only $A_{1}, D_{4}, A_{5}$ singularities and $\alpha \in [0,1]\cap \mathbb{Q}$. We need also to recall the local orbifold numbers $e_{orb}$ which appear in the context of our arrangements. We must warn the reader that calculating the actual values for $e_{orb}$ is very difficult in most cases. However, if we assume that our curves admit ${\rm ADE}$ singularities or just ordinary intersections, we use \cite[Theorem 8.7, Theorem 9.4.2]{Langer} to get the values of these numbers. Following the mentioned results, we recall that
\begin{itemize}
    \item if $q$ is a node, then $e_{orb}(q,\mathbb{P}^{2}_{\mathbb{C}}, \alpha D)=(1-\alpha)^2$ if $0 \leq \alpha \leq 1$;
    \item if $q$ is a point of type $A_{5}$, then $e_{orb}(q,\mathbb{P}^{2}_{\mathbb{C}}, \alpha D) =\frac{(4-6\alpha)^{2}}{12}$ if $\frac{1}{3} <  \alpha \leq \frac{2}{3}$;
    \item if $q$ is an ordinary triple point, then $e_{orb}(q,\mathbb{P}^{2}_{\mathbb{C}}, \alpha D) \leq \bigg(1 - \frac{3\alpha}{2}\bigg)^2$ if $0 \leq \alpha \leq \frac{2}{3}$.
\end{itemize}
Now we are ready to show our result devoted to weak combinatorics of our cubic-line arrangements.
\begin{theorem}
\label{weakH}
Let $\mathcal{EL} = \{\mathcal{E}_{1}, ..., \mathcal{E}_{k},\ell_{1}, ..., \ell_{d}\}\subset \mathbb{P}^{2}_{\mathbb{C}}$ be an arrangement consisting of $k\geq 1$ smooth cubic curves and $d\geq 1$ lines such that $3k+d\geq 6$. Assume that $\mathcal{EL}$ admits only $n_{2}$ nodes, $t_{5}$ points of type $A_{5}$, and $n_{3}$ triple intersection points. Then we have
$$27k+n_{2}+\frac{3}{4}n_{3}\geq d + 5t_{5}.$$
\end{theorem}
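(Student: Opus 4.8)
The plan is to feed the arrangement into Langer's orbifold Bogomolov--Miyaoka--Yau inequality for the log pair $(\mathbb{P}^2, \alpha D)$, where $D=\mathcal{E}_1+\dots+\mathcal{E}_k+\ell_1+\dots+\ell_d$ is the reduced arrangement divisor of degree $m:=3k+d$ and $\alpha\in(1/3,2/3]\cap\mathbb{Q}$ is a weight that I would fix only at the very end. Before applying the inequality I would check its two hypotheses. Log canonicity holds since every singularity of $\mathcal{EL}$ is an $A_1$, $A_5$ or $D_4$ point, each of log canonical threshold at least $2/3$, so any $\alpha\le 2/3$ keeps the pair log canonical. Effectivity of $K_{\mathbb{P}^2}+\alpha D$ is the condition $\alpha m-3\ge 0$; with the eventual choice $\alpha=1/2$ this becomes $3k+d\ge 6$, which is exactly the standing hypothesis and the reason it is imposed.

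Next I would write the inequality as $\bar c_1^2\le 3\,\bar c_2^{orb}$, with $\bar c_1^2=(K_{\mathbb{P}^2}+\alpha D)^2=(\alpha m-3)^2$ and $\bar c_2^{orb}$ the orbifold Euler number of the pair. The crucial bookkeeping is to assemble $\bar c_2^{orb}$ from the Euler number of the complement $\mathbb{P}^2\setminus\mathrm{Supp}(D)$, the contributions $(1-\alpha)e(\cdot)$ of the smooth loci of the components, and the local orbifold numbers $e_{orb}(p)$ at the singular points. Here the genus is decisive: a smooth cubic has $e(\mathcal{E}_i)=0$ whereas a line has $e(\ell_j)=2$, and this single discrepancy is what will break the symmetry between $k$ and $d$ and eventually create the asymmetric coefficients of $k$ and $d$. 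At the singular points I would insert the tabulated data, namely $(1-\alpha)^2$ at a node and $(4-6\alpha)^2/12$ at an $A_5$ point, together with the upper bound $(1-3\alpha/2)^2$ at a triple point; since $\bar c_2^{orb}$ appears on the larger side of $\bar c_1^2\le 3\bar c_2^{orb}$, replacing the triple-point term by this upper bound keeps the inequality valid in the direction I want.

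Once expanded, the resulting inequality is quadratic in $m=3k+d$ through $(\alpha m-3)^2$, hence not yet linear in the weak combinatorics. The decisive move is to substitute the B\'ezout identity \eqref{count}, in the form $m^2=9k+d+2n_2+6t_5+6n_3$, which annihilates the quadratic term and leaves a genuinely linear inequality in $(k,d,n_2,n_3,t_5)$ whose coefficients are explicit functions of $\alpha$. Inspecting those coefficients, the requirement that the $k$-term equal $27k$ forces $\alpha=1/2$, and one checks that this same value sends the $A_5$-term to $-5t_5$ while collapsing the node and triple-point terms to $n_2$ and $\tfrac34 n_3$ and leaving $d$ on the right; the inequality then reads precisely $27k+n_2+\tfrac34 n_3\ge d+5t_5$. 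As a reassuring consistency check, in the pure line case $k=t_5=0$ the weight $\alpha$ cancels altogether and one recovers Hirzebruch's classical $n_2+\tfrac34 n_3\ge d$.

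I expect the genuine difficulty to lie not in the final algebra but in the correct evaluation of $\bar c_2^{orb}$: one must track the Euler characteristics of the components and the number of local branches at each node, $A_5$ and triple point, and make sure the orbifold numbers used are exactly those valid for $\alpha=1/2$. A second point to nail down is that $\alpha=1/2$ really lies in the log canonical range of every singularity type present, so that Langer's hypotheses are genuinely met; with these two verifications in hand, the substitution of \eqref{count} and the choice $\alpha=1/2$ complete the argument.
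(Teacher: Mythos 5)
Your proposal is correct and follows essentially the same route as the paper: Langer's orbifold Bogomolov--Miyaoka--Yau inequality for the log pair $(\mathbb{P}^2_{\mathbb{C}},\tfrac12 D)$ (log canonical, effective since $3k+d\ge 6$), the same local orbifold Euler numbers at $A_1$, $A_5$, $D_4$ points, and the same linearization via the B\'ezout count $m^2=9k+d+2n_2+6n_3+6t_5$. The only cosmetic difference is that you keep the weight $\alpha$ general and assemble $\bar c_2^{orb}$ from Euler characteristics before being forced to $\alpha=1/2$, whereas the paper starts from $\alpha=1/2$ and invokes Langer's ready-made Milnor-number form of the inequality from \cite[Section 11.1]{Langer}.
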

\begin{proof}
We will follow the path shown in, for example, \cite{Pokora2}. 

Let $D = \mathcal{E}_{1} + ... + \mathcal{E}_{k} + \ell_{1} + ... + \ell_{d}$ be a divisor and we set $m := {\rm deg}(\mathcal{EL}) = 3k + d$ with $k\geq 1$ and $d\geq 1$. For our purposes it is enough to work with the pair $\bigg(\mathbb{P}^{2}_{\mathbb{C}}, \frac{1}{2}D\bigg)$, which is log-canonical by \cite[Proposition 2.3]{DimSer}, and this is just one of the possible references regarding this fact, and such a pair is effective since $3k+d\geq 6$. 

We are going to use the inequality from \cite[Section 11.1]{Langer}, namely 
\begin{equation}
\label{logMY}
\sum_{p \in {\rm Sing}(C)}  3\bigg( \frac{1}{2}\bigg(\mu_{p} - 1\bigg) + 1 - e_{orb}\bigg(p,\mathbb{P}^{2}_{\mathbb{C}}, \frac{1}{2} D\bigg) \bigg) \leq \frac{5}{4}m^{2} - \frac{3}{2}m,
\end{equation}
where $\mu_{p}$ is the Milnor number of a singular point $p \in {\rm Sing}(C)$.
Recall that 
\begin{itemize}
    \item if $p$ is an $A_{1}$ singularity, then its local Milnor number is $\mu_{p}=1$;
    \item if $p$ is a $D_{4}$ singularity, then its local Milnor number is $\mu_{p}=4$;
    \item if $p$ is an $A_{5}$ singularity, then its local Milnor number is $\mu_{p}=5$.
\end{itemize}
First of all, we are going to work with the left-hand side of \eqref{logMY}. Observe that
\begin{multline*}
\textbf{L}  := \sum_{p \in {\rm Sing}(C)}  3\bigg( \frac{1}{2}\bigg(\mu_{p} - 1\bigg) + 1 - e_{orb}\bigg(p,\mathbb{P}^{2}_{\mathbb{C}}, \frac{1}{2} D\bigg) \bigg) \geq \\
3n_{2}\bigg(1 - \frac{1}{4}\bigg) 
+ 3n_{3}\bigg(\frac{3}{2} + 1 - \frac{1}{16}\bigg) +3t_{5}\bigg(2 + 1 - \frac{1}{12}\bigg).  
\end{multline*}
After some simple manipulations we get
$$\textbf{L} \geq \frac{9}{4}n_{2} + \frac{117}{16}n_{3}+\frac{35}{4}t_{5}.$$
Now we look at the right-hand side of \eqref{logMY}. Since $$m^{2} = (3k+d)^2 = 9k+d+2n_{2}+6n_{3}+6t_{5}$$ we have
$$\frac{5}{4}m^{2}-\frac{3}{2}m = \frac{5}{4}(9k+d+2n_{2}+6n_{3}+6t_{5})-\frac{3}{2}(3k+d).$$
We plug data into \eqref{logMY} and we get
$$36n_{2}+117n_{3}+140t_{5} \leq 108k - 4d + 40n_{2}+120n_{3}+120t_{5}.$$
After rearranging, we finally obtain
$$27k+n_{2}+\frac{3}{4}n_{3} \geq d + 5t_{5},$$
which completes the proof.
\end{proof}
\begin{remark}
Our Hirzebruch-type inequality is rather tight. If we take the Fermat cubic and $9$ inflectional lines, i.e., lines tangent to the Fermat cubic at inflection points, we obtain an arrangement with  $n_{2}=27$, $n_{3}=3$, and $t_{5}=9$. We have than
$$54 + \frac{9}{4} = 27 + 27 + \frac{3}{4}\cdot 3 \geq  9 + 5\cdot 9 = 54.$$
\end{remark}
\section{Freeness of cubic-line arrangements}
We start with our bound on the degree of free cubic-line arrangements.
\begin{theorem}
 \label{bound}
    Let $\mathcal{EL} = \{\mathcal{E}_{1}, ..., \mathcal{E}_{k},\ell_{1}, ..., \ell_{d}\}\subset \mathbb{P}^{2}_{\mathbb{C}}$ be an arrangement consisting of $k\geq 1$ smooth cubic curves and $d\geq 1$ lines admitting only $n_{2}$ nodes, $t_{5}$ singularities of type $A_{5}$, and $n_{3}$ ordinary triple points. If $\mathcal{EL}$ is free, then $3k+d \leq 9$.
\end{theorem}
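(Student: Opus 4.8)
The plan is to turn freeness into a lower bound for the total Tjurina number and to play it against an upper bound coming from the combinatorics. Write $m = 3k+d$ for the degree of $\mathcal{EL}$. Reading off the local Tjurina numbers of the allowed singularities ($\tau=1$ for a node, $\tau=4$ for a $D_4$, and $\tau=5$ for an $A_5$), the total Tjurina number of the arrangement is $\tau(\mathcal{EL}) = n_2 + 4n_3 + 5t_5$. If $\mathcal{EL}$ is free with exponents $(d_1,d_2)$, then $d_1+d_2=m-1$ and Theorem \ref{DP} gives $\tau(\mathcal{EL}) = (m-1)^2 - d_1(m-1-d_1)$. Since $d_1\le d_2$, the product $d_1(m-1-d_1)$ is at most its value at the balanced split, so $\tau(\mathcal{EL}) \ge (m-1)^2 - \tfrac14(m-1)^2 = \tfrac34(m-1)^2$. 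This is the inequality I would use as the engine of the proof.

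Next I would produce an upper bound for $\tau(\mathcal{EL})$ purely in terms of $k$ and $d$. The combinatorial count \eqref{count} can be rewritten as $n_2 + 3n_3 + 3t_5 = N$, where $N := \tfrac12(m^2 - 9k - d)$, and subtracting it from $\tau(\mathcal{EL})$ gives the clean identity $\tau(\mathcal{EL}) = N + n_3 + 2t_5$. It therefore suffices to bound $n_3 + 2t_5$ from above. Two constraints are available: nonnegativity $n_2\ge 0$, which via the count yields $n_3 + t_5 \le N/3$ and hence controls $\tau(\mathcal{EL})$ when there are few lines; and the Hirzebruch-type inequality of Theorem \ref{weakH}, which after substituting $n_2 = N - 3n_3 - 3t_5$ becomes $8t_5 + \tfrac94 n_3 \le 27k - d + N$ and controls the number of $A_5$ points when there are many lines.

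The final step combines the two: imposing the freeness lower bound $\tfrac34(m-1)^2 \le N + n_3 + 2t_5$ against the upper bounds for $n_3 + 2t_5$ coming from nonnegativity and from Theorem \ref{weakH}, and then substituting $N = \tfrac12((3k+d)^2 - 9k - d)$, produces a polynomial inequality relating $k$ and $d$. The goal is to massage this into a quadratic constraint in $m = 3k+d$ that forces $3k+d \le 9$.

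The step I expect to be the main obstacle is precisely this last combination in the line-heavy regime, i.e. when $k$ is small and $d$ is large. There the bound $n_3 + t_5 \le N/3$ coming from the count is far too weak, and one is forced to rely on the Hirzebruch-type inequality, whose coefficient $\tfrac34$ on $n_3$ is small and therefore permits many triple points. A straightforward convex combination of the available inequalities appears to yield only a bound of the shape $3k+d \le c$ with $c$ somewhat larger than $9$, so extracting the sharp constant seems to require either a careful case analysis according to which constraint is active, an integrality refinement of the du Plessis--Wall relation (the discriminant $4\tau(\mathcal{EL}) - 3(m-1)^2$ must be a perfect square), or replacing Theorem \ref{weakH} by the orbifold inequality with the weight $\alpha$ optimized per configuration rather than fixed at $\alpha=\tfrac12$. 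Pinning down which of these is genuinely needed to reach $3k+d\le 9$ is the crux of the argument.
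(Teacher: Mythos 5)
Your reductions are individually correct: freeness gives $\tau(\mathcal{EL}) \geq \tfrac{3}{4}(m-1)^2$, the identity $\tau(\mathcal{EL}) = N + n_3 + 2t_5$ with $N = \tfrac{1}{2}(m^2-9k-d)$ holds, and so do the two linear constraints $n_3+t_5 \leq N/3$ and $8t_5 + \tfrac{9}{4}n_3 \leq N + 27k - d$. But the step you yourself flag as the crux is a genuine gap, and it cannot be repaired inside your framework. Solving the linear program (the optimum of $n_3+2t_5$ sits at the vertex where both constraints bind) gives $\tau(\mathcal{EL}) \leq \tfrac{1}{69}\left(95N + 360k - 12m\right)$, and after substituting $N = \tfrac{1}{2}(m^2-m-6k)$ and $k \leq (m-1)/3$ this yields $17m^2 - 276m + 307 \leq 0$, i.e.\ only $m \leq 15$. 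Worse, the integrality refinements you propose cannot close the remaining range either: at $m=10$ the weak combinatorics $(k,d;n_2,n_3,t_5) = (2,4;0,4,9)$ satisfies the count \eqref{count} (both sides equal $39$), satisfies the Hirzebruch-type inequality of Theorem \ref{weakH} ($57 \geq 49$), and has $\tau = 0 + 16 + 45 = 61 = 81 - 4\cdot 5$, which is exactly the du Plessis--Wall value for a free curve of degree $10$ with exponents $(4,5)$, with $d_1 = 4 \leq (m-1)/2$. So every inequality in your list, including the perfect-square condition, is simultaneously satisfiable at degree $10$; no combination of them can prove $3k+d \leq 9$, and an input beyond weak combinatorics and Tjurina numbers is unavoidable.

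That missing input is precisely what the paper uses, and it replaces your Tjurina-number engine entirely: by Dimca--Sernesi \cite[Theorem 2.1]{DimSer}, one has ${\rm mdr}(f) \geq \alpha_{\mathcal{EL}}\cdot m - 2$, where $\alpha_{\mathcal{EL}}$ is the Arnold exponent (the minimal log canonical threshold over the singular points), which for $A_1$, $D_4$, $A_5$ equals $\min\{1,\tfrac{2}{3},\tfrac{2}{3}\} = \tfrac{2}{3}$. Combined with the bound $d_1 \leq (m-1)/2$, which you already derived, this gives $\tfrac{m-1}{2} \geq \tfrac{2}{3}m - 2$ and hence $m \leq 9$ in one line; note that at $m=10$ it forces $d_1 \geq \tfrac{2}{3}\cdot 10 - 2 = \tfrac{14}{3}$, i.e.\ $d_1 \geq 5 > \tfrac{9}{2}$, which kills exactly the degree-$10$ configuration that your constraints admit. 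If you want to salvage your route, you would have to import this mdr lower bound anyway, at which point the Tjurina-number machinery becomes superfluous for Theorem \ref{bound} (it is, however, the right tool for the later degree-by-degree classification).
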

\begin{proof}
We are going to use a result due to Dimca and Sernesi \cite[Theorem 2.1]{DimSer} which tells us that in our case of cubic-line arrangements $\mathcal{EL}$ of degree $m=3k+d$ given by $f \in S_{m}$ one has
\begin{equation}
\label{mdr}
{\rm mdr}(f) \geq \alpha_{\mathcal{EL}}\cdot m-2,
\end{equation}
where $\alpha_{\mathcal{EL}}$ is the Arnold exponent of $\mathcal{EL}$. It is worth recalling that the Arnold exponent of a given reduced curve $C \subset \mathbb{P}^{2}_{\mathbb{C}}$ is defined as the minimum over all Arnold exponents $\alpha_{p}$ of singular points $p$ in $C$. In modern language, the Arnold exponents of singular points are nothing else but the log canonical thresholds of singularities. In the case of ${\rm ADE}$-singularities, their log-canonical thresholds are well-known, see for instance \cite[Page 4]{DimSer}, but let us recall these values for singular points that occur in our investigations:
\begin{itemize}
    \item if $p$ is an $A_{1}$ singularity, then $\alpha_{p}=1$;
    \item if $p$ is a $D_{4}$ singularity, then $\alpha_{p}=2/3$;
    \item if $p$ is an $A_{5}$ singularity, then $\alpha_{p}=2/3$.
\end{itemize}
Then it follows that
$$\alpha_{\mathcal{EL}} = {\rm min}\bigg\{1, \frac{2}{3}, \frac{2}{3} \bigg\} = \frac{2}{3}.$$
Recall that by our assumptions $\mathcal{EL}$ is free, which means that exponents $(d_{1},d_{2})$ satisfy the following properties
$$d_{1} + d_{2} = m-1 \quad \text{and} \quad d_{1}\leq d_{2}.$$
It means that 
$$2d_{1} \leq  d_{1} + d_{2} = m-1,$$
and we finally get $d_{1} \leq (m-1)/2$.
\\
Combining the above considerations into one piece, we have
$$\frac{m-1}{2} \geq {\rm mdr}(f) \geq \alpha_{\mathcal{EL}}\cdot m-2 = \frac{2}{3}m-2,$$
which gives us that $3k+d=m\leq 9$, and this completes our proof.
\end{proof}
It means that if $\mathcal{EL}$ is an arrangement of $k\geq 1$ smooth cubic curves and $d\geq 1$ lines admitting nodes, singularities of type $A_{5}$, and ordinary triple points, then ${\rm deg}(\mathcal{EL}) = 3k+d \in \{4,5,6,7,8,9\}$. In the next part of the paper we focus on a naive classification problem, i.e., we would like to check for which degrees $3k+d \in \{4, 5,6,7,8,9\}$ one can find a free cubic-line arrangement.
\begin{remark}
Theorem \ref{bound} explains the disappointment of the authors in \cite[page 5]{IDPS} where an arrangement, denoted there by $C^{'''}$, consisting of one smooth cubic curve and $9$ inflection lines is not free. 
\end{remark}

Now we proceed with our classification of cubic-line arrangements. We will do it case by case, checking each admissible degree. Let's start with the following general remarks.
\begin{remark} If $C$ is a reduced plane curve admitting $n_{2}$ nodes, $n_{3}$ ordinary triple points, and $t_{5}$ singularities of type $A_{5}$, then the total Tjurina number of $C$, i.e., the sum of all Tjurina numbers over all singular points of $C$, is equal to
$$\tau(C) = n_{2} + 4n_{4} + 5t_{5},$$
and this follows from the fact that ${\rm ADE}$-singularities are quasi-homogeneous \cite{arnold}, and hence the total Tjurina number is equal to the total Milnor number of $C$ -- see \cite{reif} for a detailed explanation.
\end{remark}
\begin{remark}
To verify that certain arrangements are free, we use \verb}SINGULAR} programme for symbolic computations \cite{Singular}. In our case, the crucial point is to find the minimal degree of the Jacobian syzygies, which can be done in many ways. In our case, we use the commend \verb}syz(I)}, where $I$ denotes the associated Jacobian ideal, which computes the syzygies among the partial derivatives explicitly. 
\end{remark}

Let us present our degree-wise classification considerations. First, we suppose that $3k+d=4$. In this situation we have one smooth cubic curve and one line. By the combinatorial count, one has
$$3 = n_{2}+3n_{3}+3t_{5},$$
and we have the following list of possible weak combinatorics: $$(n_{2},n_{3},t_{5}) \in \{(3,0,0),(0,0,1)\}.$$ If we have a reduced plane curve $C$ of degree $4$ which is free, then its total Tjurina number has to be equal to $7$. If we have an arrangement with $3$ nodes, then the total Tjurina number is equal to $3$, and if our arrangement has one $A_{5}$ point, then the total Tjurina number is equal to $5$. This shows that for degree $4$ we do not have free arrangements.
\\

Next, we assume that $3k+d=5$. In this scenario we have one smooth cubic curve and two lines. Our combinatorial count gives us that
$$7 = n_{2}+3n_{3}+3t_{5}.$$
We can easily check that we have $6$ possibilities for weak combinatorics, namely
$$(n_{2},n_{3},t_{5}) \in \{(1,0,2),(1,1,1),(1,2,0),(4,0,1),(4,1,0),(7,0,0)\}.$$
Recall that a reduced plane curve $C$ of degree $5$ is free if $d_{1} = 2$ and its total Tjurina number satisfies $\tau(C) = 12$. Computing naively the total Tjurina number for each weak combinatorics presented above we see that this number is less than or equal to $11$. It shows that for degree $5$ we do not have free arrangements.
\\

We investigate now the case $3k+d=6$. Let us consider the arrangement $\mathcal{EL}_{6}=\{\mathcal{E}_{1}, \ell_{1},\ell_{2}, \ell_{3}\}$ in $\mathbb{P}^{2}_{\mathbb{C}}$ which is given by
$$Q(x,y,z) = (x^{3}+y^{3}+z^{3})\cdot(x^{3}+y^{3}).$$
For this arrangement we have $t_{5}=3$ and $n_{3}=1$. Using \verb}Singular} we can check that $d_{1}={\rm mdr}(Q)=2$ since one has
$$x^{2}\cdot \frac{\partial \, Q}{\partial_{y}} -y^{2}\cdot \frac{\partial \, Q}{\partial_{x}} = 0.$$
Using Theorem \ref{DP}, we obtain that
$$19 = 25 - d_{1}(5-d_{1}) = \tau(\mathcal{EL}_{6})=3\cdot 5 + 1\cdot 4 = 19,$$
which means that $\mathcal{EL}_{6}$ is free.
\\

Now we pass to the case $3k+d=7$. Let us consider the arrangement $\mathcal{EL}_{7}=\{\mathcal{E}_{1}, \ell_{1},\ell_{2}, \ell_{3},\ell_{4}\}$ in $\mathbb{P}^{2}_{\mathbb{C}}$ which is given by
$$Q(x,y,z) = (x^{3}+y^{3}+z^{3})\cdot(x^{3}+y^{3})\cdot(y+z).$$
For this arrangement we have $t_{5}=4$, $n_{2}=3$ and $n_{3}=1$. Using \verb}Singular} we can check that $d_{1}={\rm mdr}(Q)=3$. Using Theorem \ref{DP} we obtain
$$27 = 36 - d_{1}(6-d_{1}) = \tau(\mathcal{EL}_{7})=3\cdot 1 + 1\cdot 4 + 4\cdot 5 = 27,$$
which means that $\mathcal{EL}_{7}$ is free.
\\

We consider now the case $3k+d=8$. Recall that by \cite[Theorem 2.9]{DimcaPokoraMaximizing} a reduced curve $C \subset \mathbb{P}^{2}_{\mathbb{C}}$ of degree $8$ given by $f \in S_{8}$ admitting only ${\rm ADE}$-singularities satisfies the condition that $d_{1} = {\rm mdr}(f) \geq 3$, and $C$ is free if and only if it is maximizing, i.e., $\tau(C)=37$ and $d_{1}=3$.
Now we are going to use Theorem \ref{bound} suitably adapted to our scenario, i.e., $C$ is a cubic-line arrangement with nodes, $A_{5}$ singularities, and ordinary triple points, then we have
$$\frac{7}{2}\geq {\rm mdr}(f) \geq \frac{2}{3}\cdot 8 - 2 = \frac{10}{3}.$$
Since ${\rm mdr}(f)$ is an integer, we have a contradiction.
\\

Finally, we suppose that $3k+d=9$. Using the same argument as above, we have
$$4=\frac{8}{2}\geq {\rm mdr}(f) \geq \frac{2}{3}\cdot 9 - 2 = 4,$$
so the only case to consider here is $d_1=4$.
Assuming the freeness of an arrangement, we can use Theorem \ref{DP} obtaining
\begin{equation}
\label{e1}
48=64 - d_{1}(8-d_{1}) = n_{2}+4n_{3}+5t_{5}.
\end{equation}
If the degree of our arrangements is $9$, we have $(k,d) \in \{(1,6),(2,3)\}$. Let us start with the case $(k,d)=(2,3)$.
By the combinatorial count
\begin{equation}
\label{e2}
30 = n_{2}+3n_{3}+3t_{5}.
\end{equation}
Our problem boils down to find nonnegative integer solutions of the following system of equations:
$$\begin{cases}
48 = n_{2}+4n_{3}+5t_{5}\\
30 = n_{2}+3n_{3}+3t_{3}.
		 \end{cases}$$
We have only two possible solutions, namely 
$$(n_{2},n_{3},t_{5}) \in \{(3,0,9),(0,2,8)\}.$$
At this moment we do not know whether these weak combinatorics can be realized over the complex numbers as cubic-line arrangements and we hope to come back to this issue soon.
\\\\
Let us now pass to the case $(k,d)=(1,6)$. By the combinatorial count we have
\begin{equation}
\label{e6}
33 = n_{2}+3n_{3}+3t_{3}.
\end{equation}
Again, our problem boils down to find nonnegative integer solutions of the following system of equations:
$$\begin{cases}
48 = n_{2}+4n_{3}+5t_{5}\\
33 = n_{2}+3n_{3}+3t_{3}.
		 \end{cases}$$
It turns out that we have four solutions, namely   
$$(n_{2},n_{3},t_{5}) \in \{(0,7,4),(3,5,5),(6,3,6),(9,1,7)\}.$$
Now our Hirzebruch inequality comes into play since we can easily check that weak combinatorics $(k,d;n_{2},n_{3},t_{5}) = (1,6;6,3,6)$ and $(k,d;n_{2},n_{3},t_{5}) = (1,6;9,1,7)$ do not satisfy the inequality

On the other hand, we cannot decide whether the first two weak combinatorics can be realized as cubic line arrangements over the complex numbers. We hope to return to this question as soon as possible with more effective methods.

We can summarize our discussion by the following classification result, which is the main result of our note.
\begin{theorem}
\label{class}
Let $\mathcal{EL} = \{\mathcal{E}_{1}, ..., \mathcal{E}_{k},\ell_{1}, ..., \ell_{d}\}\subset \mathbb{P}^{2}_{\mathbb{C}}$ be an arrangement consisting of $k\geq 1$ smooth cubic curves and $d\geq 1$ lines admitting only nodes, $A_{5}$ singular points, and ordinary triple points. If $\mathcal{EL}$ is free, then $3k+d \in \{6,7,9\}$, possibly except the case $3k+d=9$.
\end{theorem}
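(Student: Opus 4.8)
The plan is to run through every admissible total degree $m := 3k+d$ and decide case by case whether a free arrangement can occur. The entry point is Theorem~\ref{bound}, which already confines us to $m \in \{4,5,6,7,8,9\}$, the lower bound $m \geq 4$ coming from $k \geq 1$ and $d \geq 1$. It then remains to eliminate $m \in \{4,5,8\}$, to exhibit free arrangements for $m \in \{6,7\}$, and to show that for $m = 9$ the freeness constraints admit only finitely many numerically consistent weak combinatorics whose geometric realizability stays open.

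I would treat the small degrees $m \in \{4,5\}$ by a Tjurina-number obstruction. For a free curve, the du Plessis--Wall identity \eqref{duPles} determines $\tau$ once $d_{1} = {\rm mdr}(f)$ is known; combining the bound $d_{1} \leq (m-1)/2$ with the Arnold-exponent inequality ${\rm mdr}(f) \geq \tfrac{2}{3}m - 2$ from the proof of Theorem~\ref{bound} pins $d_{1}$ down uniquely, forcing $\tau = 7$ when $m=4$ and $\tau = 12$ when $m=5$. Against these required values I would compare the combinatorial Tjurina number $\tau = n_{2} + 4n_{3} + 5t_{5}$, ranging over the triples $(n_{2},n_{3},t_{5})$ allowed by the B\'ezout count \eqref{count}. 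In both cases the largest attainable $\tau$ falls strictly below the target, so no free arrangement exists.

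For the two middle degrees I would simply produce witnesses built from the Fermat cubic. Taking $Q = (x^{3}+y^{3}+z^{3})(x^{3}+y^{3})$ for $m=6$ and adjoining the further factor $(y+z)$ for $m=7$, I would record the weak combinatorics, exhibit an explicit lowest-degree Jacobian syzygy to certify $d_{1}$ (this being verifiable symbolically), and confirm freeness through \eqref{duPles}. This shows that $m=6$ and $m=7$ are genuinely realized. Degree $m=8$ I would then rule out purely numerically: freeness forces ${\rm mdr}(f) \leq 7/2$, while the Arnold-exponent bound gives ${\rm mdr}(f) \geq \tfrac{2}{3}\cdot 8 - 2 = \tfrac{10}{3}$, and no integer lies in $[\tfrac{10}{3}, \tfrac{7}{2}]$.

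The last degree $m=9$ is where the difficulty concentrates. The same two bounds squeeze ${\rm mdr}(f) = 4$, whence \eqref{duPles} forces $n_{2} + 4n_{3} + 5t_{5} = 48$; pairing this with the B\'ezout count \eqref{count} for each admissible factorization $(k,d) \in \{(1,6),(2,3)\}$ yields a small linear Diophantine system whose nonnegative solutions I would enumerate, and I would then discard those violating the Hirzebruch-type inequality of Theorem~\ref{weakH}. The main obstacle is that the surviving triples are numerically compatible with freeness yet I have no way here to decide whether any of them is actually realized by a cubic-line arrangement over $\mathbb{C}$. This irreducible gap in the degree-$9$ analysis is precisely what forces the qualification ``possibly except the case $3k+d=9$'' in the statement, and it is the sole source of incompleteness in the classification.
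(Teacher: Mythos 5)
Your proposal is correct and follows essentially the same route as the paper: Theorem~\ref{bound} to restrict $m=3k+d$ to $\{4,\dots,9\}$, Tjurina-number obstructions via du Plessis--Wall for $m\in\{4,5\}$, the explicit Fermat-cubic arrangements for $m\in\{6,7\}$, the integer sandwich $\tfrac{10}{3}\leq {\rm mdr}(f)\leq \tfrac{7}{2}$ for $m=8$, and for $m=9$ the forced value $d_{1}=4$, the resulting Diophantine systems for $(k,d)\in\{(1,6),(2,3)\}$, the Hirzebruch-type filter of Theorem~\ref{weakH}, and the open realizability question. Your explicit remark that the two bounds pin $d_{1}$ down uniquely in degrees $4$ and $5$ (hence $\tau=7$, resp.\ $\tau=12$) is a small clarification the paper leaves implicit, but otherwise the arguments coincide.
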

\section*{Conflict of Interests}
The author declares that there is no conflict of interest regarding the publication of this paper.
\section*{Acknowledgment}
This note is part of the author's Master's thesis, written under the supervision of Piotr Pokora. Moreover, the author is partially supported by The Excellent Small Working Groups Programme DNWZ.711/IDUB/ESWG/2023/01/00002 at the Pedagogical University of Cracow.

\vskip 0.5 cm

\bigskip
Przemys\l aw Talar,
Department of Mathematics,
University of the National Education Commission Krakow,
Podchor\c a\.zych 2,
PL-30-084 Krak\'ow, Poland. \\
\nopagebreak
\textit{E-mail address:} \texttt{p.talar367@gmail.com}
\bigskip
\end{document}